\documentclass[reqno,12pt]{amsart}

\usepackage[margin=1in]{geometry}
\usepackage[T1]{fontenc}
\usepackage[utf8]{inputenc}

\usepackage{amsthm, amsmath, amssymb,xcolor,mathtools,booktabs}

\usepackage{tikz}
\usetikzlibrary{calc,shapes}
\usetikzlibrary{decorations.pathmorphing}

\usepackage{caption} 

\captionsetup[figure]{font=small}
\captionsetup[table]{font=small}

\usepackage[textsize=small,backgroundcolor=orange!20]{todonotes}

\usepackage[hidelinks]{hyperref}
\usepackage[shortlabels]{enumitem}

\usepackage[noabbrev,capitalize]{cleveref}
\crefformat{equation}{(#2#1#3)}
\crefmultiformat{equation}{(#2#1#3)}{ and~(#2#1#3)}{, (#2#1#3)}{ and~(#2#1#3)}

\usepackage[color,final]{showkeys} 

\colorlet{refkey}{orange!20}
\colorlet{labelkey}{blue!60}


\newtheorem{theorem}{Theorem}

\newtheorem{lemma}[theorem]{Lemma}

\theoremstyle{definition}
\newtheorem{definition}[theorem]{Definition}

\newtheorem{problem}[theorem]{Problem}
\newtheorem{open}[theorem]{Open Problem}

\newtheorem{question}[theorem]{Question}

\theoremstyle{remark}

\newcommand{\abs}[1]{\left\lvert#1\right\rvert}

\newcommand{\ang}[1]{\left\langle #1 \right\rangle}
\newcommand{\angs}[1]{\langle #1 \rangle}
\newcommand{\floor}[1]{\left\lfloor #1 \right\rfloor}
\newcommand{\ceil}[1]{\left\lceil #1 \right\rceil}
\newcommand{\paren}[1]{\left( #1 \right)}

\newcommand{\set}[1]{\left\{ #1 \right\}}

\DeclareMathOperator{\tr}{tr}
\DeclareMathOperator{\rank}{rank}
\DeclareMathOperator{\nullity}{nullity}
\DeclareMathOperator{\mult}{mult}
\DeclareMathOperator{\PSL}{PSL}
\DeclareMathOperator{\Aff}{Aff}

\newcommand{\CC}{\mathbb{C}}

\newcommand{\FF}{\mathbb{F}}
\newcommand{\RR}{\mathbb{R}}


\title{Equiangular lines and eigenvalue multiplicities}
\author{Yufei Zhao}
\date{}


\begin{document}

\maketitle

Sphere packing, spherical codes, and other geometric arrangement problems have captivated the minds of Kepler, Newton, Gauss, and countless others. 
Viazovska's recent breakthrough proof of the optimality of sphere packing arrangements in dimensions 8 and 24 was recognized in her 2022 Fields Medal citation.
Despite the substantial interest and effort, these problems remain poorly understood especially in high dimensions, where precise answers are exceedingly rare.

This article delves into a recent solution of a longstanding problem on optimal configurations of equiangular lines. 
This is a rare instance of a high dimensional packing problem that we now understand quite precisely.
As we embark on our exploration of equiangular lines, we will see a beautiful link to spectral graph theory.
A surprising revelation regarding eigenvalue multiplicities emerges, and it plays a pivotal role in cracking the geometric arrangement problem.

\section*{Equiangular Lines}

How many lines can there be in $\RR^d$, all passing through the origin, and pairwise forming equal angles?
In two dimensions, the answer is three.
\begin{center}
	\begin{tikzpicture}[scale=.5]
		\draw[red, thick] (90:1) -- (90:-1);
		\draw[red, thick] (30:1) -- (30:-1);
		\draw[red, thick] (-30:1) -- (-30:-1);
	\end{tikzpicture}
\end{center}
In three dimensions, which is already harder to visualize, it turns out that the answer is six, attained by the six main diagonal lines of a regular icosahedron.

Let $N(d)$ denote the maximum number of equiangular lines in $\RR^d$.
Although the determination of each $N(d)$ turns out to be a finite problem, the complexity of the computation grows rapidly. 
The exact value of $N(d)$ is known for only finitely many $d$ (e.g., see \cite{GSY21}).

A short and elegant linear algebraic argument by Gerzon~\cite{LS73}, reproduced in the footnote\footnote{Let $v_1, \dots, v_n \in \RR^d$ be unit vectors, one along each line, with pairwise inner products $\pm \alpha$.
 Let $w_i = v_i \otimes v_i \in \RR^{d^2}$, which can be viewed as a symmetric $d\times d$ matrix, and thus determined by the $\binom{d+1}{2}$ entries on and above the diagonal.
Thus $w_1, \dots, w_n$ lie in a $\binom{d+1}{2}$-dimensional vector space.
Furthermore, $\angs{w_i, w_j} = \angs{v_i, v_j}^2 = \alpha^2$ if $i \ne j$, is equal to $1$ if $i = j$. 
The $n \times n$ matrix with $1$'s on the diagonal and $\alpha^2$ elsewhere has full rank. So $w_1, \dots, w_n$ are linearly independent. 
Therefore $n \le \binom{d+1}{2}$.
}, 
shows that $N(d) \le \binom{d+1}{2}$.
A matching quadratic lower bound $N(d) \ge c d^2$ was proved by de Caen~\cite{deC00}.
Despite further progress, there remains a constant factor gap between the current lower and upper bounds.

These constructions of quadratically many equiangular lines turn out to have pairwise angles approach $90^\circ$ as the dimension increases.
What happens if we fix the angle and let the dimension grow?

Let
$N_\alpha(d)$ 
be the maximum number of lines in $\RR^d$ with pairwise angle $\cos^{-1}\alpha$.
The case of fixed $\alpha > 0$ and large $d$ is particularly interesting.
It turns out that the number of lines grows linearly in $d$.
More precisely,
$N_\alpha(d) \le C_\alpha d$ for some constant $C_\alpha$ \cite{Buk16},
in contrast to $N(d) = \Theta(d^2)$.

\begin{question}
	For fixed $0 < \alpha < 1$, determine $\lim_{d \to \infty} N_\alpha(d) / d$.
\end{question}

With Zilin Jiang, Jonathan Tidor, Yuan Yao, and Shengtong Zhang (students and a postdoc at MIT at the time of the work),
we gave a complete answer to this question~\cite{JTYZZ21}. We will state the main results shortly.
A key step in our solution is a new result in spectral graph theory concerning eigenvalue multiplicities.
In this article, when we talk about the \emph{eigenvalues} of a graph $G$, we are always referring to eigenvalues of the \emph{adjacency matrix} $A_G$.

\begin{theorem} \label{thm:mult}
Every connected bounded degree graph has sublinear second eigenvalue multiplicity.	
More precisely, the second largest eigenvalue of the adjacency matrix of an $n$-vertex connected graph with maximum degree $\Delta$ has multiplicity $O_\Delta(n/\log\log n)$.
\end{theorem}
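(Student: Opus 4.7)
The plan is to obstruct high multiplicity by exploiting the locality inherent in bounded-degree graphs. Let $V \subset \RR^n$ denote the $\lambda_2$-eigenspace, with $\dim V = m$, and let $P$ be the orthogonal projection onto $V$. Fixing an orthonormal basis $u_1,\dots,u_m$ of $V$, define the spherical embedding $\phi\colon \{1,\dots,n\} \to \RR^m$ by $\phi(v)=(u_1(v),\dots,u_m(v))$. Then $\sum_v \|\phi(v)\|^2 = \operatorname{tr} P = m$ and $\sum_v \phi(v)\phi(v)^\top = I_m$, so the $\phi(v)$ are spread out in $\RR^m$; the task is to upper bound $m$.

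The first step is a polynomial approximation of $P$. Because every eigenvalue of $A$ lies in $[-\Delta,\Delta]$, I would choose a polynomial $q$ of some degree $r$ (e.g.\ a suitably dilated Chebyshev polynomial) for which $q(\lambda_2)$ is large while $|q(\lambda)|$ is small for every other eigenvalue $\lambda$ of $A$. Then $q(A)\approx q(\lambda_2)\,P$, so in particular
\[
(q(A))_{vw}\;\approx\;q(\lambda_2)\,\langle\phi(v),\phi(w)\rangle .
\]
The key locality property is that $A^k$ has nonzero entries only between pairs at graph distance at most $k$; hence $(q(A))_{vw}=0$ whenever $d(v,w)>r$, and otherwise its value is determined by the combinatorics of the induced subgraph on $B_r(v)\cup B_r(w)$.

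The second step is a counting/pigeonhole argument on local neighborhoods. The number of isomorphism types of rooted radius-$r$ balls in a max-degree-$\Delta$ graph is bounded by some $F(\Delta,r)$ growing like $2^{\Delta^{O(r)}}$. By pigeonhole, many vertices share a common local type. For two such vertices $v,v'$, the row patterns of $q(A)$ at $v$ and at $v'$ agree after identifying their local balls, so $\phi(v)$ and $\phi(v')$ have approximately matching inner products with every other $\phi(w)$: vanishing when $w$ is far, and rigidly prescribed within the respective balls. Combining this rigidity with $\sum_v \|\phi(v)\|^2 = m$ should yield a bound of the form $m \lesssim n/r$ up to logarithmic factors, and then balancing $r$ against the doubly exponential $F(\Delta,r)$ forces $r$ of order $\log\log n$, yielding $m = O_\Delta(n/\log\log n)$.

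The main obstacle I anticipate is the polynomial approximation step. If some eigenvalue of $A$ happens to lie arbitrarily close to $\lambda_2$, no polynomial of fixed degree $r$ can separate them, and the approximation $q(A)\approx q(\lambda_2)P$ fails. One must instead treat a whole cluster of nearby eigenvalues together---replacing $m$ by the total multiplicity in a small spectral window and projecting onto that window instead---or design $q$ adapted to the actual spectrum. A secondary obstacle is converting the combinatorial pigeonhole into a genuine rank bound on $V$: vertices with isomorphic $r$-balls yield related $\phi$-values, but turning this relation into a quantitative linear dependence, in the presence of the polynomial-approximation error, is the delicate heart of the argument.
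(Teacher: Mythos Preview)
Your approach diverges substantially from the paper's, and the central step---the pigeonhole on local ball types---does not do what you hope. Isomorphic $r$-balls do \emph{not} force any linear relation among the eigenvector coordinates $\phi(v)$. Consider the cycle $C_n$: every vertex has an identical $r$-ball (a path of length $2r$), yet the $\lambda_2$-eigenspace has dimension only $2$, and its eigenvectors (discrete sines and cosines) take genuinely different values at different vertices. What isomorphic balls give you is merely that $(q(A))_{vw} = (q(A))_{v',\sigma(w)}$ under the ball isomorphism $\sigma$; but when $d(v,v') > 2r$ the supports of row $v$ and row $v'$ of $q(A)$ are disjoint, so these rows are orthogonal rather than dependent. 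There is no mechanism here to extract $m \lesssim n/r$, and I do not see how to repair it.

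You also underuse the hypothesis that $\lambda$ is the \emph{second} eigenvalue. In your sketch it enters only through the polynomial separation of $\lambda_2$ from $\lambda_1$, which you rightly flag as fragile. The paper uses it in an entirely different and essential way: by Courant--Fischer, any two vertices $v,v'$ with $\lambda_1(B_G(v,s+1)) > \lambda$ must lie within distance $2s+2$ of each other (otherwise one could build two disjointly supported test vectors contradicting that $\lambda$ is the second eigenvalue), so there are at most $\Delta^{2s+2}$ such ``bad'' vertices. After deleting them, every remaining radius-$s$ ball has spectral radius at most $\lambda$. The paper then deletes a small $r$-net (size $\lceil n/(r+1)\rceil$, found greedily in a spanning tree), which strictly lowers every local spectral radius: $\lambda_1(B_H(v,s))^{2r} \le \lambda^{2r}-1$. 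Now the trace bound $\mult(\lambda,H)\,\lambda^{2s} \le \sum_v \lambda_1(B_H(v,s))^{2s}$ gives $\mult(\lambda,H) \le n(1-\lambda^{-2r})^{s/r}$, and Cauchy interlacing transfers this back to $G$ at the cost of the $O(n/r)$ deleted vertices. Choosing $r \asymp \log\log n$ and $s \asymp \log n$ balances the terms. The doubly-exponential count of ball isomorphism types plays no role.
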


While much attention has been given to the spectral gap due to its connections to important properties like expansion and mixing (e.g., see the survey~\cite{HLW06} on expander graphs), little is known about eigenvalue multiplicities. 
\cref{thm:mult} is one of the first general results of this form.
We will explore the eigenvalue multiplicity problem later in this article.
It remains a tantalizing open problem to improve the bound of $O_\Delta(n/\log\log n)$ in \cref{thm:mult}.
The best lower bound we know is on the order of $\sqrt{n/\log n}$.

\subsection*{History}

While the general problem of equiangular lines has appeared in papers at least as early as the 1940's, the fixed angle version was considered in a 1973 paper by Lemmens and Seidel~\cite{LS73}, who determined the value of $N_{1/3}(d)$ for all $d$. In particular, they showed that
\[
N_{1/3}(d) = 2(d-1) \quad \text{for all } d \ge 15.
\]
The next case was solved in a 1989 paper by Neumaier~\cite{Neu89}, who showed that
\[
N_{1/5} (d) = \floor{\frac{3}{2} (d-1)} \quad \text{for all sufficiently large $d$.}
\]
As for $N_{1/7}(d)$, Neumaier wrote in his paper that ``the next interesting case will require substantially stronger techniques.''

Progress on this problem stalled until Bukh~\cite{Buk16} reignited interest in the problem in his 2016 paper. He showed that $N_\alpha(d) \le C_\alpha d$ for some constant $C_\alpha$ for each $\alpha$. 
Bukh furthermore conjectured that
$N_{1/(2k-1)}(d) = \frac{k}{k-1} d+ O_k(1)$ for any integer $k\ge 2$.
This was followed by work by Balla, Dräxler, Keevash, and Sudakov~\cite{BDKS18}, who showed that 
\[
\limsup_{d \to \infty} N_\alpha(d)/d \le 1.99 \quad \text{for all }\alpha \ne 1/3.
\]
Jiang and Polyanskii~\cite{JP20} formulated a conjecture for $\limsup_{d \to \infty} N_\alpha(d)/d$ as a function of $\alpha$. We defer the precise statement until \cref{thm:equiang} since it requires a new definition.
They proved their conjecture for all $\alpha> (1 + 2\sqrt{2+\sqrt5})^{-1} = 0.195...$, where the threshold comes from a problem they solved in spectral graph theory. 
They also explained why this is a natural barrier for all existing methods, clarifying Nermaier's earlier comment.
The missing key turned out to be sublinear second eigenvalue multiplicities (\cref{thm:mult}).

\subsection*{Results}

Our work~\cite{JTYZZ21} determined the limit of $N_\alpha(d)/d$ as $d \to \infty$ for all $\alpha$, confirming earlier conjectures of Bukh~\cite{Buk16} and Jiang and Polyanskii~\cite{JP20}.
For many values of $\alpha$, specifically those where the limit is greater than $1$,
 the theorem actually gives the exact value of $N_\alpha(d)$ for large $d$. 
 
 First, let us state the result when $1/\alpha$ is an odd integer, as the answer has a simpler form.

\begin{theorem} \label{thm:equiang-int}
	For every integer $k \ge 2$, there is some $d_0(k)$ so that
	\[
	N_{1/(2k-1)} (d) = \floor{\frac{k}{k-1} (d-1)} \qquad \text{for all } d \ge d_0(k).
	\]
\end{theorem}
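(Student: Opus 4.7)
The plan is to reduce the problem to spectral graph theory and deploy \cref{thm:mult} as the decisive input.

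\emph{Lower bound.} Take $m = \lfloor (d-1)/(k-1) \rfloor$ disjoint copies of $K_k$, let $A$ be the adjacency matrix of this graph, and form the Seidel matrix $\Sigma = J - I - 2A$. A direct computation shows that $I + \Sigma/(2k-1)$ is positive semidefinite of rank at most $d$, so it is the Gram matrix of $N = mk$ unit vectors in $\RR^d$ with pairwise inner products $\pm 1/(2k-1)$. A routine adjustment at the boundary yields $\lfloor k(d-1)/(k-1) \rfloor$ lines exactly.

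\emph{Upper bound, reduction to graphs.} Suppose we have $N$ equiangular lines with $\alpha = 1/(2k-1)$ in $\RR^d$. The Gram matrix of representative unit vectors has the form $I + \Sigma/(2k-1)$ with $\Sigma$ a Seidel matrix; positive-semidefiniteness and rank $\le d$ force the smallest eigenvalue of $\Sigma$ to be exactly $-(2k-1)$ with multiplicity $N-d$. After Seidel switching, write $\Sigma = J - I - 2A$ with $A$ the adjacency matrix of a graph $G$ on $N$ vertices. Translating the constraint: all eigenvalues of $A$ on $\one^\perp$ are at most $k-1$, and the multiplicity of $k-1$ in the spectrum of $A$ is $N-d$ up to a trivial $O(1)$ correction. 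Since $\lambda_1(A) \le k-1$ forces $\Delta(G) \le (k-1)^2$ (via $\lambda_1 \ge \sqrt{\Delta}$), the graph $G$ has bounded degree, placing us within the scope of \cref{thm:mult}.

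\emph{Structure and counting.} Decompose $G$ into connected components. For any connected component with spectral radius exactly $k-1$, Perron--Frobenius forces its contribution to the $k-1$ eigenspace to be just one-dimensional; a structural classification (in the tradition of Smith and Hoffman graphs with bounded $\lambda_1$) shows such a component has at least $k$ vertices, with equality precisely for $K_k$. This alone yields $N - d \le N/k$, hence $N \le kd/(k-1)$, essentially matching the desired bound. To close the gap to the exact floor $\lfloor k(d-1)/(k-1) \rfloor$, one must prevent a single large connected component from contributing too many $(k-1)$-eigenvectors. This is where \cref{thm:mult} fires: passing to a connected supergraph in which $k-1$ sits as the \emph{second} eigenvalue — for instance by adjoining an apex vertex, which bumps $\lambda_1$ above $k-1$ — the sublinear multiplicity bound precludes linearly many $(k-1)$-eigenvectors on a large piece. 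Combined with the small-component count and a careful tracking of the $\one$-direction correction, this delivers $N \le \lfloor k(d-1)/(k-1) \rfloor$ for $d$ sufficiently large.

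The main obstacle is the passage from \cref{thm:mult}, which governs the second eigenvalue of a \emph{single connected} graph, to the spectrum of the potentially disconnected graph $G$ that arises from the configuration. One has to find the correct connected supergraph and the correct spectral placement so that the eigenvalue $k-1$ becomes the second rather than the first eigenvalue; without this step, the structural classification of small components alone only gives the asymptotic bound $N/d \le k/(k-1) + o(1)$, and one cannot extract the exact identity for large $d$. Making this reduction robust against all possible exotic components is the technical heart of the proof.
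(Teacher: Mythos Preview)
Your argument has a genuine gap at the bounded-degree step. From positive semidefiniteness of the Gram matrix you correctly deduce that $A_G \preceq (k-1)I + \tfrac12 J$, hence all eigenvalues of $A_G$ \emph{restricted to} $\one^\perp$ are at most $k-1$. But this is \emph{not} the same as $\lambda_1(A_G)\le k-1$: there can be one eigenvalue of $A_G$ exceeding $k-1$, and it can be arbitrarily large. Consequently the inference ``$\lambda_1\ge\sqrt{\Delta}$ so $\Delta\le(k-1)^2$'' collapses. In the paper this is exactly the hard step: obtaining bounded degree requires a separate nontrivial switching argument (\cref{thm:bdd}), proved via forbidden induced subgraphs and Ramsey's theorem, not a one-line spectral inequality. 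Without it you cannot invoke \cref{thm:mult}, whose hypothesis is bounded degree.

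There is a second, related problem in how you deploy \cref{thm:mult}. Once bounded degree is (correctly) in hand, the dichotomy is: either $\lambda_1(G)=k-1$, in which case your Perron--Frobenius/component count already gives the exact floor with a little care (this is the ``easy'' case in the paper), or some component $C$ has $\lambda_1(C)>k-1$ and $k-1$ is \emph{already} its second eigenvalue. In the latter case \cref{thm:mult} applies directly to $C$; no apex vertex is needed, and indeed adjoining an apex would create a vertex of degree $|C|-1$ and destroy the bounded-degree hypothesis you need. Your final paragraph inverts where the difficulty lies: the component analysis for $\lambda_1=k-1$ is not the bottleneck, and the ``single large component'' case is handled by applying \cref{thm:mult} as stated, once \cref{thm:bdd} has been established.
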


We now know that \cref{thm:equiang-int} holds for $d_0(k) = 2^{k^{Ck}}$ for some constant $C$~\cite{Bal21}, whereas conjecturally $d_0(k) = k^C$ is enough.
The bottleneck lies in improving \cref{thm:mult}.

To state the result for all other angles, we need some notation.

\begin{definition}
The \emph{spectral radius order} $k(\lambda)$ of $\lambda > 0$ is defined to be the minimum possible number of vertices of a graph with top eigenvalue $\lambda$.
We set $k(\lambda) = \infty$ if no such graph exists.
\end{definition}

In other words, imagine enumerating all graphs in order of their number of vertices, computing the largest eigenvalue, and halting the first time $\lambda$ comes up. Then $k(\lambda)$ is the number of vertices in the corresponding graph.\footnote{The computation of $k(\lambda)$ given $\lambda$ is an interesting problem that we do not fully understand.
For example, we do not know how to certify that some $\lambda$ has $k(\lambda) = \infty$. 
There are some necessary conditions on $\lambda$ in order that $k(\lambda) < \infty$.
For example $\lambda$ should be a Perron number, namely a positive real algebraic integer whose Galois conjugates are all real and less than $\lambda$ in absolute value.
However, being Perron is not sufficient for $k(\lambda) < \infty$ \cite{Sch23}.}
Now we are ready to state the main result from \cite{JTYZZ21}. See \cref{tab:spec-rad-order} for examples.

\begin{table}[htb]
	\centering	
	\caption{Examples for \cref{thm:equiang}.
	Here $G$ is a graph on $k = k(\lambda)$ vertices with top eigenvalue $\lambda$.\label{tab:spec-rad-order}}

	\renewcommand{\arraystretch}{1.25}
	\begin{tabular}{ccccc}
		\toprule
		$\alpha$ & $\lambda$ & $k$ & $G$ & $N_\alpha(d) \ \forall \text{ large } d$ \\
		\midrule
		$1/3$ & $1$ & $2$ & 
		\begin{tikzpicture}[
		v/.style={circle, fill, minimum size = 2pt,inner sep = 0pt}, 
		baseline={([yshift=-.8ex]current bounding box.center)},
		scale=.4]
			\node[v] (1) at (0,0) {};
			\node[v] (2) at (1,0) {};			
			\draw (1)--(2);
		\end{tikzpicture} 
		& $2(d-1)$
		\\
		$1/5$ & $2$ & $3$ & 
				\begin{tikzpicture}[v/.style={circle, fill, minimum size = 2pt,inner sep = 0pt}, 
		baseline={([yshift=-.8ex]current bounding box.center)},
		scale=.2]
			\node[v] (1) at (90:1) {};
			\node[v] (2) at (210:1) {};
			\node[v] (3) at (-30:1) {};
			\draw (1)--(2)--(3)--(1);
		\end{tikzpicture} 
		& $\floor{\frac{3}{2} (d-1)}$
		 \\
		$1/7$ & $3$ & $4$ &  
		\begin{tikzpicture}[v/.style={circle, fill, minimum size = 2pt,inner sep = 0pt}, 
			baseline={([yshift=-.8ex]current bounding box.center)},
			scale=.4]
			\node[v] (1) at (0,0) {};
			\node[v] (2) at (1,0) {};			
			\node[v] (3) at (1,1) {};
			\node[v] (4) at (0,1) {};			
			\draw (1)--(2)--(3)--(4)--(1)--(3) (2)--(4);
		\end{tikzpicture} 
		& $\floor{\frac{4}{3} (d-1)}$
		\\
		$1/(2k-1)$ & $k-1$ & $k$ & $K_k$ 
		& 
		$\floor{\frac{k}{k-1} (d-1)}$
		\\
		$1/(1+2\sqrt{2})$ & $\sqrt{2}$ & $3$ & 
		\begin{tikzpicture}[v/.style={circle, fill, minimum size = 2pt,inner sep = 0pt}, 
		baseline={([yshift=-.8ex]current bounding box.center)},
		scale=.2]
			\node[v] (1) at (90:1) {};
			\node[v] (2) at (210:1) {};
			\node[v] (3) at (-30:1) {};
			\draw (3)--(1)--(2);
		\end{tikzpicture} 
		& $\floor{\frac{3}{2} (d-1)}$
		 \\				
		\bottomrule
	\end{tabular}
\end{table}

\begin{theorem}
\label{thm:equiang}
	Let $0 < \alpha < 1$. Let $\lambda = (1-\alpha)/(2\alpha)$ and $k = k(\lambda)$ be the spectral radius order.
	\begin{enumerate}[(a)]
		\item If $k < \infty$, then there is some $d_0(\alpha)$ so that 
		\[
		N_\alpha(d) = \floor{\frac{k}{k-1}(d-1)} \qquad \text{for all } d \ge d_0(\alpha).
		\]
		\item If $k = \infty$, then
		\[
		N_\alpha(d) = d+ o(d) \qquad  \text{as } d \to \infty.
		\]
	\end{enumerate}
\end{theorem}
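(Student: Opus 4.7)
The plan is to translate the equiangular lines problem into a question about graph adjacency spectra and then apply \cref{thm:mult}. Pick a unit vector $v_i$ along each of the $n = N_\alpha(d)$ lines, so that $\langle v_i, v_j\rangle \in \{\pm\alpha\}$ for $i\ne j$, and let $G$ be the graph on $[n]$ with $ij \in E(G)$ iff $\langle v_i, v_j\rangle = -\alpha$. Seidel switching (replacing $v_i$ by $-v_i$, which toggles all edges at $i$) together with a combinatorial reduction controlling vertices of high degree will let me assume $G$ has maximum degree bounded by some $\Delta_0 = \Delta_0(\alpha)$. The Gram matrix
\[
M = (1-\alpha)I + \alpha J - 2\alpha A_G
\]
is PSD of rank at most $d$; restricted to $\mathbf{1}^{\perp}$ it acts as $(1-\alpha)I - 2\alpha A_G$, so a dimension count identifies its kernel there with the $\lambda$-eigenspace of $A_G$ intersected with $\mathbf{1}^{\perp}$, where $\lambda := (1-\alpha)/(2\alpha)$. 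Consequently $\mathrm{mult}_{A_G}(\lambda) \ge n - d - 1$.

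Next, decompose $G = G_1 \sqcup \cdots \sqcup G_s$ into connected components with Perron eigenvalues $\mu_i$. Each component contributes to $\mathrm{mult}_{A_G}(\lambda)$ in one of three ways: if $\mu_i = \lambda$, the contribution equals $1$ (by Perron--Frobenius) and $|V(G_i)| \ge k$ by definition of the spectral radius order; if $\mu_i > \lambda$, then $\lambda$ is a non-top eigenvalue and the contribution is at most $O_{\Delta_0}\!\left(|V(G_i)|/\log\log|V(G_i)|\right)$ by \cref{thm:mult} (or an adaptation of its proof to eigenvalues below the second); if $\mu_i < \lambda$, the contribution is $0$. Letting $t$ count components of the first type, so that $tk \le n$, we obtain
\[
n - d - 1 \;\le\; t + o(n) \;\le\; \frac{n}{k} + o(n).
\]
In case (b), $k = \infty$ forces $t = 0$, giving $n \le d + o(d)$. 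In case (a), the inequality rearranges to $n \le \frac{k}{k-1}d + o(d)$, with matching lower bound coming from taking $m \approx d/(k-1)$ disjoint copies of a $k$-vertex graph $H_0$ realizing top eigenvalue $\lambda$ and realizing the Gram matrix $(1-\alpha)I + \alpha J - 2\alpha A_{mH_0}$ to produce $mk$ equiangular unit vectors spanning $\RR^{m(k-1)+1}$.

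Two points will demand the most care. First, promoting the asymptotic upper bound in case (a) to the exact floor formula $\floor{\frac{k}{k-1}(d-1)}$ requires a rigidity argument forcing near-extremal configurations to decompose (apart from a negligible part) as disjoint unions of copies of $H_0$, combined with an integer-optimization analysis at the boundary to pin down the small additive correction. Second, and more seriously, \cref{thm:mult} as stated controls only the multiplicity of the second largest eigenvalue, whereas a component with $\mu_i > \lambda$ may have $\lambda$ sitting much deeper in its spectrum; bridging this gap demands either strengthening the multiplicity theorem to arbitrary non-top eigenvalues or using additional switching to arrange that $\lambda$ is the second eigenvalue of each relevant component. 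I expect this second point to be the main obstacle.
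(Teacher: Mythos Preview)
Your outline is essentially the paper's, but the obstacle you flag as ``the main obstacle'' is not one: it dissolves via a one-line positive-semidefiniteness argument that you have overlooked. Suppose $A_G$ had two eigenvalues (counted with multiplicity) strictly greater than $\lambda$. Their eigenspace would be at least two-dimensional, so it contains a unit vector $w$ with $\langle w, \mathbf 1\rangle = 0$; then
\[
w^{\mathsf T} M w \;=\; (1-\alpha)\|w\|^2 - 2\alpha\, w^{\mathsf T} A_G w + \alpha\,\langle w,\mathbf 1\rangle^2 \;<\; (1-\alpha) - 2\alpha\lambda \;=\; 0,
\]
contradicting $M \succeq 0$. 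Hence $G$ has \emph{at most one} eigenvalue exceeding $\lambda$. In your component decomposition this means at most one $G_i$ has $\mu_i > \lambda$, and in that single component every other eigenvalue is $\le \lambda$; so if $\lambda$ occurs there at all, it is exactly the second eigenvalue, and \cref{thm:mult} applies verbatim. There is no need to strengthen the multiplicity theorem to deeper eigenvalues or to perform further switching.

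Your first concern is likewise overcomplicated. With the observation above, the lone component with $\mu_i > \lambda$ (if present) contributes $o(n)$ to $\mathrm{mult}_{A_G}(\lambda)$, yielding $n \le d + o(d)$ in that scenario; so for $d$ large the optimal configuration has every component with top eigenvalue $\le \lambda$. In that regime $\mathrm{mult}_{A_G}(\lambda)$ equals the number $t$ of components whose top eigenvalue is exactly $\lambda$, each of size at least $k$, whence $t \le n/k$. Feeding this into $n \le d + t + 1$ and matching against your lower-bound construction gives the exact value $\lfloor k(d-1)/(k-1)\rfloor$ after a short integer calculation; no rigidity or stability analysis is needed.
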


\section*{Connection to Spectral Graph Theory}

The classic \textit{Algebraic Graph Theory} (Graduate Text in Mathematics) by Godsil and Royle dedicates an entire chapter to equiangular lines, which the authors call ``one of the founding problems of algebraic graph theory.''
Let us explain the connection between equiangular lines and spectral graph theory.

Given equiangular lines $\ell_1, \dots, \ell_n$ in $\RR^d$, pick a unit vector $v_i$ along the direction of $\ell_i$ for each $i$. 
There are two choices for $v_i$.
We make the choice arbitrarily for now and revisit this choice later.
Since the lines pairwise  form angle $\theta = \cos^{-1}\alpha$, we have $\ang{v_i, v_j} = \pm \alpha$ for all $i \ne j$.
Define a graph $G$ with vertices $1, \dots, n$, and an edge between $i$ and $j$ if $\ang{v_i, v_j} = -\alpha$. 

\begin{center}
\begin{tikzpicture}
[v/.style={circle, fill, minimum size = 2pt,inner sep = 0pt}, scale=.7,font=\footnotesize]
\begin{scope}
\draw (0:1)--(0:-1);	
\draw (60:1)--(60:-1);	
\draw (120:1)--(120:-1);	
\end{scope}

\begin{scope}[xshift=2.8cm]
\draw[-stealth] (0,0)--(0:1) node[label=below:$v_1$]{};	
\draw[-stealth] (0,0)--(60:1) node[label=right:$v_2$]{};
\draw[-stealth] (0,0)--(120:1) node[label=left:$v_3$]{};
\end{scope}

\begin{scope}[xshift=5.8cm]
\node[v,label=below:$1$] (1) at (-30:.5) {};
\node[v,label=above:$2$] (2) at (90:.5) {};
\node[v,label=below:$3$] (3) at (210:.5) {};
\draw(1)--(3);
\end{scope}

\node[rectangle,
draw, align=left,
font=\tiny] at (8.3,0.5) 
{
edge: obtuse\\
non-edge: acute
};
\end{tikzpicture}
\end{center}

The Gram matrix has ones on the diagonal and $\pm \alpha$ off diagonal, with signs determined by the graph $G$:
(here $I$ is the identity matrix and $J$ is the all-ones matrix)
\begin{align*}
\text{Gram matrix} 
&=
\begin{pmatrix}
\ang{v_1, v_1} & \cdots & \ang{v_1, v_n} \\
\vdots & \ddots & \vdots \\
\ang{v_n, v_1} & \cdots & \ang{v_n, v_n} \\
\end{pmatrix}
\\
&= 
(1 - \alpha) I - 2\alpha A_G + \alpha J.
\end{align*}

The Gram matrix has rank at most $d$, since $v_1, \dots, v_n \in \RR^d$. Furthermore, the Gram matrix is positive semidefinite.
Conversely, any positive semidefinite $n \times n$ matrix with rank at most $d$ and ones on the diagonal is the Gram matrix of some collection of unit vectors in $\RR^d$. Indeed, we can recover $v_1, \dots, v_n$ up to isometry from their Gram matrix using the Cholesky decompostion.

Via the above correspondence, for a given $\alpha$ and $d$, 
the following two quantities are both equal to $N_\alpha(d)$:
\begin{enumerate}[(a)]
	\item the maximum number of lines in $\RR^d$ with pairwise angle $\cos^{-1}\alpha$;
	\item the maximum number of vertices of a graph $G$ for which
	\[(1-\alpha) I - 2\alpha A_G + \alpha J\]
	 is positive semidefinite and has rank at most $d$.
\end{enumerate}

\section*{Proof Outline}

\subsection*{Lower bound constructions for $N_\alpha(d)$}

As an example, let us show 
\[
N_{1/5}(2\ell + h + 1) \ge 3\ell + h \qquad \text{for integers }\ell, h \ge 0.
\]
We need to exhibit $3\ell + h$ unit vectors in $\RR^{2\ell + h + 1}$ with pairwise inner products $\pm 1/5$.
Rather than providing the coordinates of the vectors, which would be quite unwieldy, it suffices to provide their Gram matrix.
The associated graph $G$ turns out to be a disjoint union of $\ell$ triangles along with $h$ isolated vertices.
\begin{center}
\begin{tikzpicture}[v/.style={circle, fill, minimum size = 3pt,inner sep = 0pt}, scale=.4]
\foreach \i in {0, 3, 8}{
	\begin{scope}[xshift={\i cm}]
		\node[v] (a) at (90:1) {};
		\node[v] (b) at (210:1) {};
		\node[v] (c) at (-30:1) {};
		\draw (a)--(b)--(c)--(a);
	\end{scope}
	\node at (5.7,0.2) {$\cdots$};
}
	\node[v] at (10,0.5) {};
\end{tikzpicture}
\end{center}
It remains to verify that the associated matrix
\[
(1 - \alpha) I - 2\alpha A_G + \alpha J = 
(4I - 2A_G + J)/5
\]
is positive semidefinite and has rank at most $2\ell + h + 1$.
Indeed, positive semidefiniteness follows from being a positive linear combination of
\begin{itemize}
\item $2I - A_G$, which is positive semidefinite since the top eigenvalue of a triangle is $2$, and
\item $J$, which is also positive semidefinite.
\end{itemize}
As for rank, each of the $\ell$ triangle components of $2I - A_G$ contributes a new dimension to its kernel.
So the dimension of the kernel is $\nullity(2I - A_G) = \ell$.
Thus $\rank(2I - A_G) = 2\ell + h$. 
Since $\rank J = 1$, we have $\rank(2I - A_G + J) \le 2\ell + h +1$. This finishes the proof of $N_{1/5}(2\ell + h + 1) \ge 3\ell + h$.

The same argument shows the general lower bound $N_\alpha(d) \ge \floor{\frac{k}{k-1}(d-1)}$ from \cref{thm:equiang}.

\subsection*{Upper bound on $N_\alpha(d)$}
Applying the rank--nullity theorem, we have
\[
n = \rank(\text{Gram matrix}) + \nullity(\text{Gram matrix}).
\]
The rank of the Gram matrix is at most $d$ since the vectors live in $\RR^d$. 
Since $\rank J = 1$,
\begin{align*}
\MoveEqLeft \mathrm{nullity}(\text{Gram matrix}) 
\\
&= \mathrm{nullity}((1-\alpha)I - 2\alpha A_G + \alpha J)
\\
&\le \mathrm{nullity}((1-\alpha)I - 2\alpha A_G) + 1
\\
&= \mathrm{mult}(\lambda, G) + 1,
\end{align*}
where $\mathrm{mult}(\lambda, G)$ denotes the multiplicity of $\lambda = (1-\alpha)/(2\alpha)$ as an eigenvalue of $A_G$.
Already we see the connection between equiangular lines and eigenvalue multiplicities.

The graph $G$ cannot have more than one eigenvalue greater than $\lambda$. 
Indeed, otherwise their span would contain a unit vector $w$ satisfying $\ang{w, A_Gw} > \lambda$ and $\ang{w, \mathbf 1} = 0$. It would contradict the positive semidefiniteness of the Gram matrix as
\begin{align*}
\MoveEqLeft
\ang{w, (\text{Gram matrix})w} 
\\
&=
\ang{w, ((1-\alpha) I - 2\alpha A_G + \alpha J) w}
\\
&
< 1-\alpha - 2\alpha \lambda 
< 0.
\end{align*}

Thus, for $\lambda$ to be an eigenvalue of $G$, we must be in one of the following two situations:
\begin{enumerate}[(a)]
	\item $\lambda$ is the largest eigenvalue of $G$. In the more interesting case of $k = k(\lambda) < \infty$, this case turns out to produce the equality case when the dimension $d$ is large. The optimal $G$ giving the maximal value of $\mult(\lambda,G)$ ends up being a disjoint union of small graphs each with top eigenvalue at most $\lambda$. This matches the lower bound construction from earlier, giving $n = \floor{k(d-1)/(k-1)}$. This case is easy to analyze. We skip the details and refer the readers to \cite{JTYZZ21}.
	\item $\lambda$ is the second largest eigenvalue of $G$. 
	This is the harder and more interesting case.
	The crux of the argument is to show that this case is suboptimal when the dimension $d$ is large.
	Note that this situation could indeed occur for optimal configurations in small dimensions, such as constructions giving $N(d) = \Theta(d^2)$.
\end{enumerate}

We want to show that the second largest eigenvalue of $A_G$ has small multiplicity. 
By a short argument that we omit, we can assume that $G$ is connected.
Not all connected graphs have small second eigenvalue multiplicity.
For example, a clique on $n$ vertices has top eigevalue $n-1$ with multiplicity $1$, and all $n-1$ remaining eigenvalues equal to $-1$.

Balla et al.~\cite{BDKS18} observed some important constraints on $G$.
Recall that in constructing $G$ from a set of equiangular lines, we took one unit vector $v$ along each line.
In doing so, we made an arbitrary choice whether to take $v$ or $-v$. 
Recall that the edges of $G$ correspond to inner product $-\alpha$ while non-edges correspond to inner product $\alpha$.
Switching some vector to its negative changes the graph by swapping all the edges and non-edges out of the corresponding vertex.

\begin{center}
\begin{tikzpicture}
[v/.style={circle, fill, minimum size = 2pt,inner sep = 0pt}, 
d/.style={diamond, fill, red, minimum size = 6pt,inner sep = 0pt}, 
scale=.7,font=\footnotesize]
\begin{scope}[xshift=0cm]
	\node[d] (1) at (180:1) {};
	\foreach \i in {2,...,6}
		\node[v] (\i) at (240-60*\i:1) {};
	\draw (1)--(2) (1)--(3) (2)--(3) (3)--(4) (4)--(5)--(6) (3)--(5);
	\draw[-stealth,red,thick] (-1.5,0) node[left]{$v$} -- ++(60:.5);
\end{scope}
\draw[decorate, decoration={snake, amplitude=0.5mm},
		->,blue,thick] (1.5,0)-- node[above,red]{$v \mapsto -v$} (3.8,0);

\begin{scope}[xshift=6cm]
	\node[d] (1) at (180:1) {};
	\foreach \i in {2,...,6}
		\node[v] (\i) at (240-60*\i:1) {};
	\draw (1)--(4)(1)--(5)(1)--(6) (2)--(3) (3)--(4) (4)--(5)--(6) (3)--(5);
	\draw[-stealth,red,thick] (-1.5,0) node[left]{$v$} -- ++(60:-.5);
\end{scope}
\end{tikzpicture}
\end{center}
It turns out that one can always switch the graph to get bounded maximum degree.
Here is a precise statement taken from \cite{JTYZZ21}, based on \cite{BDKS18}.
We will sketch a proof in the next section.

\begin{theorem}[Switching to bounded degree] \label{thm:bdd}
	For every $\alpha$, there exists $\Delta$ that depends on $\alpha$ only, so that
	given any configuration of lines (in any dimension) with pairwise angle $\cos^{-1}\alpha$, we can choose one unit vector along each line so that each of these unit vectors has inner product $-\alpha$ with at most $\Delta$ other unit vectors.
\end{theorem}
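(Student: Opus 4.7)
The plan is to choose an orientation maximizing a suitable global potential and then show, by a multi-vertex switching argument, that the resulting maximum degree must already be bounded by a constant depending only on $\alpha$. I would start from the signed-Gram structure: each sign pattern $\epsilon\in\{\pm 1\}^n$ yields the graph $G_\epsilon$ with edges at pairs $ij$ satisfying $\epsilon_i\epsilon_j\ang{v_i,v_j}=-\alpha$. Take $\epsilon$ maximizing $\Phi(\epsilon)=\sum_{i<j}\epsilon_i\epsilon_j\ang{v_i,v_j}$, equivalently minimizing the number of $-\alpha$ pairs. Optimality under single-vertex flips already forces $\deg_{G_\epsilon}(v)\le (n-1)/2$ for every $v$, which is useless by itself; the real work is to rule out degree much larger than $\Delta(\alpha)$ via a more elaborate flip.

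The key local input is the standard projection trick. Let $v$ be any vertex with neighborhood $N=N_{G_\epsilon}(v)$, so $\ang{v,v_j}=-\alpha$ for $j\in N$. Decomposing $v_j=-\alpha v+\sqrt{1-\alpha^2}\,u_j$ gives unit vectors $u_j\in v^\perp$ with
\[
\ang{u_j,u_k}=\begin{cases}\alpha/(1+\alpha)&\text{if }jk\notin E(G_\epsilon),\\-\alpha/(1-\alpha)&\text{if }jk\in E(G_\epsilon).\end{cases}
\]
A clique in $G_\epsilon[N]$ then yields unit vectors with common inner product $-\alpha/(1-\alpha)<0$, and positive semi-definiteness of their Gram matrix forces the clique to have size at most $1/\alpha$. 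Hence $\omega(G_\epsilon[N])\le K(\alpha)$, a constant depending only on $\alpha$.

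Now I would argue by contradiction: suppose some $v$ has degree $m$ far larger than any $\alpha$-dependent constant. Since $\omega(G_\epsilon[N])\le K(\alpha)$, Ramsey's theorem produces a large independent set $T\subseteq N$; then $\{v\}\cup\{-v_j:j\in T\}$ is a cluster of unit vectors with all pairwise inner products $+\alpha$, concentrated near a common direction. I would flip $\epsilon_j$ for each $j\in T$: this detaches $T$ from $N(v)$, dropping $\deg v$ by $|T|$, and creates no new edges inside $T$ (since they remain pairwise non-adjacent); the only side effects are edges between $T$ and the remaining vertices, which flip. A Cauchy--Schwarz bound on $\ang{v_\ell,\sum_{j\in T}v_j}$ for each outside $v_\ell$ then bounds the net degree change at $v_\ell$, and one hopes the aggregate side effect is dominated by the gain of $|T|$ at $v$, so the flip strictly increases $\Phi$, contradicting optimality.

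The main obstacle is precisely this bookkeeping. The naive Cauchy--Schwarz estimate bounds the degree change at each outside $v_\ell$ by roughly $|T|/\sqrt{\alpha}$, which is too weak when $\alpha$ is small and gives no contradiction on its own. To close the argument at a universal value $m=\Delta(\alpha)$ independent of $n$, I expect one must either average the flipping argument over many choices of $T$, or replace $\Phi$ by a potential that penalizes high-degree vertices more heavily and gives stronger optimality conditions. This is where the low-rank / PSD constraint on the global Gram matrix has to be invoked, since without it the high-degree problem can simply hop from one vertex to another instead of disappearing.
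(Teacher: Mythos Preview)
Your proposal has a genuine gap, and you have honestly identified it yourself: the potential-maximization argument does not close. Flipping a large independent set $T$ inside $N(v)$ does decrease $\deg v$, but the side effects at the remaining vertices are of the same order, and no amount of averaging over choices of $T$ or reweighting of $\Phi$ will fix this without an additional structural input. The difficulty is real: with only the clique bound and single-vertex optimality, one cannot prevent the high degree from migrating around the graph under flips.

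The paper's argument supplies exactly the missing structural input, and it is \emph{not} a potential argument at all. The key step is a second forbidden configuration coming from positive semidefiniteness: one shows that $G$ cannot contain a vertex $u$ together with large sets $A$, $B$ such that there are no $A$--$B$ edges, $u$ is complete to $A$, and $u$ is empty to $B$. This is proved by testing the Gram matrix against a vector that is a small constant $\epsilon$ on $u$, $+1/|A|$ on $A$, and $-1/|B|$ on $B$; the resulting quadratic form is negative once $|A|$ and $|B|$ are large. With this in hand the switching is explicit rather than variational: Ramsey gives a large independent set $S$; the forbidden configuration forces every outside vertex to have either boundedly many neighbors or boundedly many non-neighbors in $S$; flip each outside vertex so it has boundedly many neighbors in $S$; finally, if some vertex $u$ still had large degree, take a large $A$ among its neighbors and let $B$ be $S$ minus the (bounded) $S$-neighborhoods of $\{u\}\cup A$, yielding the forbidden configuration again. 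No global potential is optimized, and the PSD constraint is used twice in this specific three-part form rather than through Cauchy--Schwarz on $\sum_{j\in T} v_j$.
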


To complete the proof of \cref{thm:equiang} on the maximum number of equiangular lines with a fixed angle, 
recall we are left with the case when $\lambda$ is the second largest eigenvalue of a connected graph $G$.
By \cref{thm:bdd}, we can assume that $G$ has bounded degree.
By \cref{thm:mult}, we have $\mathrm{mult}(\lambda, G) = o(n)$.
So $n \le d + \mathrm{mult}(\lambda, G) + 1 = d + o(n)$, and thus $n \le d + o(d)$.
When $k = k(\lambda) < \infty$, having $n \le d + o(d)$ is suboptimal compared to the case of $\lambda$ being the largest eigenvalue of $G$, in which the optimal value is $n = \floor{k(d-1)/(k-1)}$.

\section*{Switching to a Bounded Degree Graph}

Let us sketch the proof of \cref{thm:bdd}.
We start with unit vectors $v_1, \dots, v_n \in \RR^d$ with pairwise inner products $\pm \alpha$.
We form the ``negative graph'' $G$ with vertices $\{1, \dots, n\}$, where $ij$ is an edge if $\ang{v_i, v_j} = -\alpha$.
We want to show that we can negate some subset of vectors to turn $G$ into a bounded degree graph. 
Flipping each $v_i$ to $-v_i$ has the effect of interchanging the neighborhood and non-neighborhood of vertex $i$.

\subsection*{Forbidden induced subgraphs}
We start by identifying two types of forbidden induced subgraphs of $G$, using the positive-semidefiniteness of $G$:
\begin{enumerate}
	\item[(I)] Large clique;
	\item[(II)] Some vertex $u$, two large subsets $A$ and $B$ with no edges between them, such that $u$ is complete to $A$ and empty to $B$ (i.e., no edges between $u$ to $B$).
\end{enumerate}
\begin{center}
\begin{tikzpicture}[v/.style={circle, fill, minimum size = 2pt,inner sep = 0pt}, scale=.7,
font=\footnotesize,
baseline={([yshift=-.5ex]current bounding box.center)}]
\foreach \i in {0,...,7}
	\node[v] (\i) at (45*\i:1) {};
\foreach \i in {0,...,7}{
	\foreach \j in {0,...,7}{
            \ifnum\i<\j\relax
                \draw (\i) -- (\j);
            \fi
    }
}
\end{tikzpicture}
\qquad 
\begin{tikzpicture}[v/.style={circle, fill, minimum size = 3pt,inner sep = 0pt}, scale=.4,
font=\footnotesize,
baseline={([yshift=-.5ex]current bounding box.center)}]
\node[v,label=left:$u$] (u) at (0,5) {};

\coordinate (a) at (-2,0);
\begin{scope}[shift=(a)]
\coordinate (a1) at (-40:1);	
\coordinate (a2) at (-40:-1);	
\coordinate (a3) at (90:.8);	
\coordinate (a4) at (90:-.8);	
\end{scope}

\coordinate (b) at (2,0);
\begin{scope}[shift=(b)]
\coordinate (b1) at (40:1);	
\coordinate (b2) at (40:-1);	
\coordinate (b3) at (90:.8);	
\coordinate (b4) at (90:-.8);	
\end{scope}

\draw[fill=black] (u) -- (a1)--(a2) --cycle;
\draw (u) -- (b1)--(b2) --cycle;

\draw (b3)--(b4)--(a4)--(a3)--cycle;

\draw[fill=pink!30] (a) circle (1.4) node[label={[label distance=3mm]left:$A$}]{$?$};
\draw[fill=pink!30] (b) circle (1.4) node[label={[label distance=3mm]right:$B$}]{$?$};
\end{tikzpicture}

\end{center}

Here ``large'' means a large constant.
We will be rather imprecise with constants and quantifiers in this proof sketch, so as to not be bogged down by details.

Let us first address (I).
The Gram matrix restricted to an $\ell$-vertex clique is
\[
\begin{pmatrix}
	1 & -\alpha & \cdots & -\alpha \\
	-\alpha & 1 & \cdots & -\alpha \\
	\cdots & \cdots & \ddots & \vdots \\
	-\alpha & -\alpha & \cdots & 1 
\end{pmatrix}.
\]
The sum of entries is $\ell - \ell(\ell-1)\alpha$, which must be nonnegative due to the matrix being positive semidefinite.
So $\ell \le 1 + 1/\alpha$. 
In other words, $G$ has no clique on more than $1 + 1/\alpha$ vertices.

The proof of (II) is based on a similar idea except that we need a slightly more complicated test vector for positive semidefiniteness. 
The Gram matrix restricted to $\set{u} \cup A \cup B$ is
\[
\paren{\footnotesize 
	\begin{array}{c|ccc|ccc}
		1 & -\alpha &  \cdots & -\alpha & \alpha & \cdots & \alpha  \\
		\hline
		-\alpha & 1  &   & ?  & 1  &   & \alpha \\		
		\vdots  &   & \ddots &   &  &  \ddots  & \\		
		-\alpha & ?  &   &  1 & \alpha  &   & 1 \\
		\hline		
		\alpha & 1  &   & \alpha   & 1  &   & ? \\		
		\vdots &  & \ddots &  &  & \ddots &\\
		\alpha & \alpha  &   & 1  & ? &   & 1 \\
	\end{array}
}
\]
Our test vector $v$ assigns a small constant $\epsilon > 0$ to $u$, as well as $1/\abs{A}$ to each vertex of $A$, and likewise $1/\abs{B}$ to each vertex of $B$. Then
\begin{align*}
v^\intercal M v 
&\approx 
\begin{pmatrix}
	\epsilon \\ 1 \\ -1 \\
\end{pmatrix}^\intercal
\begin{pmatrix}
	1 & -\alpha & \alpha \\
	-\alpha & \star & \alpha \\
	\alpha & \alpha & \star
\end{pmatrix}
\begin{pmatrix}
	\epsilon \\ 1 \\ -1 \\
\end{pmatrix}
\\
&= 2(\star - \alpha) - 2\epsilon \alpha  + \epsilon^2 < 0.
\end{align*}
Here $\star \in [-\alpha,\alpha]$.
The first approximation ignores the negligible contributions from diagonal matrix entries, which is valid if $A$ and $B$ are large compared to $1/\epsilon$.

\subsection*{Switching argument}
Using the above forbidden induced subgraphs, let us now argue that $G$ can be switched to a bounded degree graph.

By Ramsey's theorem\footnote{Balla~\cite{Bal21} later found another proof not using Ramsey's theorem, leading to a better quantitative result. We now know that, in \cref{thm:bdd}, one can take $\Delta$ to be $O(\alpha^{-4})$.}, not having a large clique implies that we must have a large independent set $S$, whose size grows with $n$.

Consider any vertex $u$ outside $S$. We claim that $u$ has either a bounded number of neighbors in $S$ or a bounded number of non-neighbors in $S$.
Indeed, let $A$ and $B$ be respectively the neighbors and non-neighbors of $u$ in $S$. 
There are no edges between $A$ and $B$ since they are both contained in the independent set $S$.
If $A$ and $B$ were both large, then together with $u$ they would form a type II forbidden configuration.

Thus, for every vertex $u$ outside $S$, by negating the vector corresponding to $u$ if necessary, we may assume that $u$ has only a bounded number of neighbors in $S$.

Finally, we claim that the resulting graph has bounded degree. 
Suppose some vertex $u$ has many neighbors.
Let $A$ be a large but bounded subset of neighbors of $u$.
Recall that each vertex in $A$ has a bounded number of neighbors in $S$. 
Let $B$ be $S$ minus the neighborhood of $\set{u}\cup A$. 
Then $u, A, B$ would once again violate (II).

\section*{Sublinear Second Eigenvalue Multiplicity}

Let us now discuss \cref{thm:mult}.
It says that a connected $n$-vertex graph with maximum degree\footnote{We view $\Delta$ as a constant, though the dependence on $\Delta$ is quite mild as the hidden leading constant is $O(\log \Delta)$.}
$\Delta$ has second eigenvalue multiplicity at most $O_\Delta(n / \log\log n)$.
Before diving into the proof, let us discuss some near miss examples that illustrate the necessity of the hypotheses.

\emph{Bounded degree.} A complete graph on $n$ vertices has second eigenvalue multiplicity $n-1$. Other examples with linear eigenvalue multiplicity include strongly regular graphs, such as Paley graphs.

\emph{Connectedness.} A disjoint union of $n/2$ edges has top eigenvalue $1$ repeated $n/2$ times. So the theorem is false without the connectivity hypothesis.

\emph{Second eigenvalue.} The following graph has eigenvalue 0 with linear multiplicity. One such eigenvector is shown (unlabeled vertices are set to zero).
\begin{center}
\begin{tikzpicture}[v/.style={circle, fill, minimum size = 2pt,inner sep = 0pt},scale=0.4]
  \foreach \i in {0,1,2,5,6,7} {
      \begin{scope}[xshift=\i*2cm]
        \node[v] (\i) at (0,0) {};
    	\node[v] (\i0) at (-.5,1) {};
    	\node[v] (\i1) at ( .5,1) {};
		\draw (\i)--(\i0);
		\draw (\i)--(\i1);
	\end{scope}
  }
  \node[font=\tiny,inner sep=0pt, baseline=0pt] at (  3.3,1.5) {$-1$};
  \node[font=\tiny,inner sep=0pt, baseline=0pt] at (4.2,1.5) {$\phantom{+}1$};
  \draw (0)--(1)--(2) (5)--(6)--(7);
  \draw (2) -- ++(1,0);
  \draw (5) -- ++(-1,0);
  \node at (7.1,.5) {$\cdots$};
\end{tikzpicture}	
\end{center}
Furthermore, here is a graph whose least eigenvalue $-3$ has linear multiplicity, arising from the eigenvector of $K_{3,3}$ with eigenvalue $-3$.
\begin{center}
\begin{tikzpicture}[v/.style={circle, fill, minimum size = 2pt,inner sep = 0pt},scale=0.3]
  \foreach \i in {0,1,2,5,6,7} {
      \begin{scope}[xshift=\i*2cm]
        \node[v] (\i) at (0,0) {};
        \foreach \j in {1,2,3}{
        	\node[v] (\i\j0) at (-.6,\j) {};
        	\node[v] (\i\j1) at ( .6,\j) {};
        }
        \foreach \j in {1,2,3}
        	\foreach \k in {1,2,3}
        		\draw (\i\j0)--(\i\k1);
		\draw (\i)--(\i10);
		\draw (\i)--(\i11);
	\end{scope}
  }
  \draw (0)--(1)--(2) (5)--(6)--(7);
  \draw (2) -- ++(1,0);
  \draw (5) -- ++(-1,0);
  \node at (7.1,1) {$\cdots$};
\end{tikzpicture}	
\end{center}
So \cref{thm:mult} is really about the top of the spectrum.
Essentially the same proof shows that for any constant $j$, the $j$-th largest eigenvalue of a connected graph with maximum degree $\Delta$ has multiplicity $O_{\Delta, j}(n/\log\log n)$.

\subsection*{Proof of sublinear second eigenvalue multiplicity}

Let us prove that $\mult(\lambda, G) = O_\Delta(n/\log\log n)$ for a connected $n$-vertex graph $G$ with maximum degree $\Delta$ and second largest eigenvalue $\lambda$.
We use the trace method to bound eigenvalue multiplicities. 
Write $\lambda_1(G) \ge \cdots \ge \lambda_n(G)$ for the eigenvalues of $A_G$, including multiplicities.
We have
\[
\mult(\lambda, G) \lambda^{2s} \le \sum_{i=1}^n \lambda_i(G)^{2s} = \tr A_G^{2s}.
\]
The first inequality is quite wasteful and so the inequality on its own is not enough to give a sublinear bound on $\mult(\lambda, G)$.
The right-hand side counts closed walks of length $2s$ in $G$.
Such a walk must stay in the radius $s$ neighborhood of its starting vertex. 
This leads us to examine the spectral radius of balls in $G$, which we refer to informally as a ``local spectral radius.'' See \cref{fig:local}.

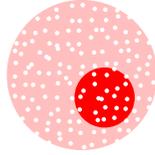
\begin{figure}
\centering
\begin{tikzpicture}
\fill[pink] (0,0) circle (1);
\fill[red] (0.3,-.3) circle (.4);
\foreach \x/\y in {
-0.89/-0.28, -0.91/-0.14, -0.89/-0.02, -0.88/0.16, -0.89/0.32, -0.87/0.42, -0.79/-0.58, -0.72/-0.46, -0.78/-0.3, -0.75/-0.13, -0.75/0.03, -0.71/0.1, -0.81/0.28, -0.76/0.41, -0.77/0.57, -0.6/-0.78, -0.61/-0.6, -0.58/-0.44, -0.66/-0.33, -0.65/-0.13, -0.57/0.03, -0.67/0.14, -0.56/0.3, -0.66/0.45, -0.62/0.56, -0.59/0.75, -0.44/-0.9, -0.46/-0.73, -0.45/-0.64, -0.41/-0.52, -0.49/-0.29, -0.52/-0.15, -0.44/0.02, -0.44/0.22, -0.47/0.24, -0.43/0.42, -0.49/0.59, -0.44/0.75, -0.29/-0.85, -0.3/-0.78, -0.31/-0.61, -0.32/-0.46, -0.33/-0.3, -0.32/-0.11, -0.3/0.03, -0.29/0.14, -0.32/0.35, -0.33/0.39, -0.33/0.56, -0.33/0.77, -0.34/0.87, -0.15/-0.91, -0.15/-0.73, -0.16/-0.62, -0.13/-0.49, -0.13/-0.31, -0.16/-0.11, -0.21/-0.04, -0.17/0.1, -0.13/0.34, -0.18/0.41, -0.18/0.59, -0.13/0.72, -0.13/0.92, 0.03/-0.96, -0.02/-0.77, -0.04/-0.62, -0.03/-0.46, -0.02/-0.29, 0.02/-0.21, -0.01/0.02, 0.01/0.11, -0.01/0.29, -0.04/0.41, -0.04/0.59, -0.03/0.72, 0.01/0.93, 0.13/-0.93, 0.16/-0.76, 0.21/-0.6, 0.18/-0.5, 0.22/-0.31, 0.17/-0.12, 0.16/0.01, 0.17/0.12, 0.16/0.3, 0.12/0.5, 0.19/0.6, 0.13/0.73, 0.17/0.88, 0.3/-0.88, 0.29/-0.75, 0.29/-0.59, 0.31/-0.43, 0.26/-0.32, 0.33/-0.21, 0.33/0.0, 0.33/0.15, 0.27/0.31, 0.34/0.5, 0.27/0.6, 0.32/0.73, 0.31/0.87, 0.47/-0.88, 0.5/-0.73, 0.43/-0.57, 0.46/-0.44, 0.43/-0.28, 0.47/-0.18, 0.43/0.08, 0.45/0.15, 0.52/0.32, 0.45/0.43, 0.44/0.63, 0.48/0.78, 0.47/0.86, 0.62/-0.74, 0.59/-0.61, 0.58/-0.45, 0.6/-0.32, 0.58/-0.12, 0.59/-0.02, 0.62/0.19, 0.62/0.33, 0.6/0.44, 0.58/0.57, 0.63/0.77, 0.74/-0.59, 0.7/-0.48, 0.75/-0.31, 0.78/-0.14, 0.72/-0.02, 0.76/0.08, 0.73/0.25, 0.73/0.46, 0.76/0.56, 0.89/-0.43, 0.9/-0.32, 0.95/-0.1, 0.87/0.02, 0.87/0.14, 0.9/0.24, 0.89/0.43
}
	\fill[white] (\x,\y) circle (.03);
\end{tikzpicture}
\caption{A key idea in the proof of \cref{thm:mult} is that removing a net of vertices (white dots) significantly lowers local spectral radii ($\lambda_1$ of the dark disk). \label{fig:local}}
\end{figure}

A key new idea is to delete a small net from the graph. 
An \emph{$r$-net} in a graph $G$ is a subset of vertices such that every vertex in $G$ lies within distance $r$ from some vertex in the $r$-net. 
We show that \textbf{deleting a net significantly reduces local spectral radii}.
This observation combined with the earlier inequality yields a good upper bound on the multiplicity of $\lambda$ in $G$ with the net removed.
Finally, to bound $\mult(\lambda, G)$, observe that by the Cauchy eigenvalue interlacing theorem, deleting $h$ vertices can reduce each eigenvalue multiplicity by at most $h$.

\begin{lemma}[Finding a small net] \label{lem:net}
	For every $n$ and $r$, every connected $n$-vertex graph has an $r$-net of size at most $\ceil{n/(r+1)}$. 
\end{lemma}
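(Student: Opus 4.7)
The plan is to reduce to the case of trees and then induct on $n$. For the reduction, any spanning tree $T$ of $G$ satisfies $d_T(u,v) \ge d_G(u,v)$ pointwise, so an $r$-net for $T$ is automatically an $r$-net for $G$; I may therefore assume $G$ itself is a tree, which I again call $T$.

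The base case $n \le r+1$ is handled by a single vertex, since such a tree has diameter at most $r$, and $1 = \ceil{n/(r+1)}$ in this range. For the inductive step with $n > r+1$, I root $T$ at some $v_0$ and let $v$ be a vertex of maximum depth. If this maximum depth is at most $r$, then $\{v_0\}$ already works. Otherwise $\mathrm{depth}(v) \ge r+1$, and I let $u$ be the ancestor of $v$ at depth $\mathrm{depth}(v) - r$, and let $T_u$ be the subtree of $T$ rooted at $u$.

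By maximality of the depth of $v$, every vertex of $T_u$ lies within distance $r$ of $u$, so $u$ by itself $r$-dominates $T_u$. Moreover $T_u$ contains the descending path from $u$ to $v$, giving $|T_u| \ge r+1$. Since $\mathrm{depth}(u) \ge 1$, removing $T_u$ leaves a nonempty subtree $T \setminus T_u$ on $n - |T_u|$ vertices; by induction this remainder has an $r$-net $S'$ of size at most $\ceil{(n - |T_u|)/(r+1)} \le \ceil{n/(r+1)} - 1$, and then $S' \cup \{u\}$ is the desired $r$-net of $T$ of size at most $\ceil{n/(r+1)}$.

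The main design choice, and the only place where the accounting could go wrong, is ensuring that each added net vertex is responsible for at least $r+1$ removed vertices; this is exactly what forces the choice of $u$ to sit $r$ levels above a deepest vertex, since then the descending $u$-to-$v$ path gives $|T_u| \ge r+1$ for free. Everything else reduces to routine tree bookkeeping, so I do not anticipate any serious obstacle.
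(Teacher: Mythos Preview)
Your proof is correct and follows essentially the same approach as the paper: reduce to a spanning tree, then iteratively peel off the subtree rooted at the vertex $r$ steps above a deepest leaf, noting that this subtree has at least $r+1$ vertices and is $r$-dominated by its root. Your write-up is simply a more explicit inductive formulation of the paper's ``add $u$ to the net, delete the far side, repeat'' description.
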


\begin{center}
\begin{tikzpicture}[
		v/.style={circle, fill, minimum size = 2pt,inner sep = 0pt}, 
		b/.style={regular polygon, regular polygon sides = 3,blue,fill, minimum size = 6pt,inner sep = 0pt}, 
		s/.style={star,red,fill, minimum size = 5pt,inner sep = 0pt}, 
		r/.style={circle, red,draw, minimum size = 3pt,inner sep = 0pt}, 
		rr/.style={rectangle,red,draw, minimum size = 4pt,inner sep = 0pt}, 
		baseline={([yshift=-.8ex]current bounding box.center)},
		scale=.5,
		font=\footnotesize
]

\fill[pink] (4.7,.7) circle (2.2 and 1.5);
\node[b,label={[blue]left:root}] (0) at (0,0) {};
\node[v] (00) at (1,1) {};
\node[v] (01) at (1,-1) {};
\node[v] (000) at (2,1.5) {};
\node[v] (001) at (2,0.5) {};
\node[s,label={[red]90:$u$}] (0010) at (3,0.5) {};
\node[v] (0011) at (3,-.5) {};
\node[r] (00101) at (4,-.2) {};
\node[v] (00110) at (2.5,-1) {};
\node[v] (00111) at (3.5,-1) {};
\node[r] (00100) at (4,0.5) {};
\node[r] (001000) at (5,1) {};
\node[r] (001001) at (5,0) {};
\node[rr] (0010000) at (6,1) {};

\draw (0)--(00)--(000) (00)--(001)--(0010)--(00100)--(001000)--(0010000) (0010)--(00101) (00100)--(001001) (001)--(0011)--(00110) (0011)--(00111) (0)--(01);

\end{tikzpicture}
	
\end{center}

\begin{proof}

First replace the graph by a spanning tree. 
	Pick an arbitrary vertex as the root. 
Pick a vertex furthest from the root, and walk $r$ steps towards the root to end up at vertex $u$.
Add $u$ to the net and keep only the the subtree containing the root after deleting $u$.
This removes at least $r+1$ vertices. Repeat.
\end{proof}

\begin{lemma}[Net removal reduces spectral radius] \label{lem:spec-rad-decr}
	If $H$ (with at least one vertex) is obtained from $G$ by deleting an $r$-net, then $\lambda_1(H)^{2r} \le \lambda_1(G)^{2r} - 1$.
\end{lemma}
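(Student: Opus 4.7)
The plan is to compare $A_G^{2r}$ restricted to $V(H)$ with $A_H^{2r}$ entrywise, show that it dominates $A_H^{2r} + I$, and then combine Perron--Frobenius monotonicity with Cauchy interlacing. Let $M$ denote the principal submatrix of $A_G^{2r}$ with rows and columns indexed by $V(H)$. Since $A_G^{2r}$ is symmetric and nonnegative with $\lambda_1(A_G^{2r}) = \lambda_1(G)^{2r}$, Cauchy interlacing gives $\lambda_1(M) \le \lambda_1(G)^{2r}$. Once the entrywise inequality $M \ge A_H^{2r} + I$ is established, monotonicity of the spectral radius among nonnegative matrices finishes things off:
\[
\lambda_1(G)^{2r} \ge \lambda_1(M) \ge \lambda_1(A_H^{2r} + I) = \lambda_1(H)^{2r} + 1.
\]

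The off-diagonal comparison $(A_G^{2r})_{uv} \ge (A_H^{2r})_{uv}$ for distinct $u, v \in V(H)$ is immediate, because any length-$2r$ walk in $H$ is also a length-$2r$ walk in $G$. The crux is the diagonal: I need to exhibit, for each $w \in V(H)$, at least one closed walk of length exactly $2r$ in $G$ starting and ending at $w$ that passes through the deleted net $S$. This is where the net property enters. Since $S$ is an $r$-net and $w \notin S$, some $s \in S$ lies at distance $\ell$ from $w$ in $G$ with $1 \le \ell \le r$; fix a shortest path $w = w_0, w_1, \ldots, w_\ell = s$. Construct the desired walk by first oscillating $r - \ell$ times along the edge $\{w_0, w_1\}$ (a total of $2(r - \ell)$ steps, returning to $w_0$), then traversing $w_0 \to w_1 \to \cdots \to w_\ell \to w_{\ell - 1} \to \cdots \to w_0$. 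The result is a closed walk of length exactly $2r$ at $w$ that visits $s \in S$, so it is not a walk in $H$.

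I do not anticipate any serious obstacle. The only subtle point is parity, which is automatic: the padding $2(r - \ell)$ is even, so no odd cycle is needed, and the edge on which to oscillate always exists because $\ell \ge 1$. The quantitative gap of $1$ in the lemma corresponds precisely to the single extra closed walk produced per diagonal entry, and any slack beyond this (which is substantial when $V(H)$ is large) is simply discarded by the Cauchy interlacing step.
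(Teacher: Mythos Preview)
Your proof is correct and follows essentially the same approach as the paper: compare $A_H^{2r}$ entrywise with $A_G^{2r}$ (restricted to $V(H)$), gain a $+1$ on each diagonal from an explicit closed walk through the net, and finish with Perron--Frobenius monotonicity. The only cosmetic differences are that the paper zeroes out the net rows and columns rather than passing to a principal submatrix (so it avoids invoking Cauchy interlacing), and it leaves the construction of the length-$2r$ closed walk through the net implicit, whereas you spell out the oscillation padding.
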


\begin{proof}
We can assume that $G$ has no isolated vertices.
 Write $A_G$ for the adjacency matrix of $G$, and $A_H$ the matrix obtained by zeroing out the rows and columns corresponding to vertices of the deleted net.
 Clearly $A_H^{2r} \le A_G^{2r}$ entrywise. 
Let $v$ be a vertex of $G$.
Consider the diagonal entries indexed by $(v,v)$ in $A_H^{2r}$ and $A_G^{2r}$.
They count closed walks of length $2r$ starting at $v$.
There are always strictly more such walks in $G$ than in $H$ since at least one such walk passes through the net.
Thus $A_H^{2r} \le A_G^{2r} - I$ entrywise, and the lemma follows.
\end{proof}

Let us write $B_G(v, r)$ for the subgraph of $G$ induced by all vertices within distance $r$ of $v$.

\begin{lemma}[Local vs global spectra] \label{lem:local-global}
	For every graph $G$ and positive integer $s$,
	\[
	\sum_{i=1}^{\abs{G}}\lambda_i(G)^{2s} \le \sum_{v \in V(G)} \lambda_1(B_G(v, s))^{2s}.
	\]
\end{lemma}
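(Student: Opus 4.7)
The plan is to use the trace method, noting that the left-hand side equals $\tr(A_G^{2s})$, i.e., the total number of closed walks of length $2s$ in $G$. Decomposing this sum by starting vertex gives
\[
\sum_{i=1}^{\abs{G}}\lambda_i(G)^{2s} = \tr(A_G^{2s}) = \sum_{v\in V(G)} (A_G^{2s})_{vv},
\]
so it suffices to show that for each vertex $v$, the diagonal entry $(A_G^{2s})_{vv}$ is at most $\lambda_1(B_G(v,s))^{2s}$.

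The key observation, already flagged in the surrounding text, is locality: any closed walk of length $2s$ starting at $v$ never leaves $B_G(v,s)$, since from $v$ one can reach a vertex at distance $d$ only by using at least $d$ edges, and by symmetry one needs at least $d$ edges to return, so every intermediate vertex is within distance $s$ of $v$. Consequently, the number of closed walks of length $2s$ at $v$ in $G$ equals the corresponding count in the induced subgraph $B_G(v,s)$, which is exactly $(A_{B_G(v,s)}^{2s})_{vv}$.

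It remains to bound this local diagonal entry by the local top eigenvalue to the $2s$. Writing $A_{B_G(v,s)} = \sum_i \mu_i u_i u_i^\intercal$ in an orthonormal eigenbasis,
\[
(A_{B_G(v,s)}^{2s})_{vv} = \sum_i \mu_i^{2s} u_i(v)^2 \le \lambda_1(B_G(v,s))^{2s}\sum_i u_i(v)^2 = \lambda_1(B_G(v,s))^{2s},
\]
where I use $\sum_i u_i(v)^2=1$ together with $\mu_i^{2s}\le \lambda_1(B_G(v,s))^{2s}$, the latter following from Perron--Frobenius applied to the nonnegative symmetric matrix $A_{B_G(v,s)}$ (so its spectral radius equals its largest eigenvalue). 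Summing over $v$ yields the lemma.

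There is no real obstacle here: the argument is a two-line application of the trace method, with the only subtlety being the elementary containment of closed walks inside the ball $B_G(v,s)$ and the standard Perron--Frobenius bound that converts a diagonal entry of $A^{2s}$ into $\lambda_1^{2s}$. I would be careful to state the walk-containment claim precisely (every vertex on a closed walk of length $2s$ from $v$ lies at distance at most $s$ from $v$), since this is the only place the choice of $2s$ rather than an odd length matters for the symmetric argument.
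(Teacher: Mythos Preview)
Your proof is correct and follows essentially the same approach as the paper: interpret the left-hand side as $\tr(A_G^{2s})$, observe that each closed walk of length $2s$ from $v$ stays inside $B_G(v,s)$, and bound the resulting diagonal entry $1_v^\intercal A_{B_G(v,s)}^{2s} 1_v$ by $\lambda_1(B_G(v,s))^{2s}$. Your spectral-decomposition justification of this last step, together with the explicit appeal to Perron--Frobenius, simply unpacks what the paper writes in one line.
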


\begin{proof}
	The left-hand side, being the trace of $A_G^{2s}$, counts closed walks of length $2s$ in $G$. The number of walks that begin and end at vertex $v$ is $1_v^\intercal A_G^{2s} 1_v$. Since walks must stay within a radius $r$ ball of $v$, this quantity also equals to $1_v^\intercal A_{B_G(v, s)}^{2s} 1_v \le \lambda_1(B_G(v, s))^{2s}$. Finally, sum over all vertices $v$ in $G$.
\end{proof}

Let us now prove $\mult(\lambda, G) = O_\Delta(n/\log\log n)$.
Let $r = \ceil{c \log\log n}$ and $s = \ceil{c \log n}$ for some small constant $c>0$ that is allowed to depend on $\Delta$.

The set of vertices $v$ in $G$ with $\lambda_1(B_G(v, s+1)) > \lambda$ must all lie within distance $2s + 2$ of each other, or else it would violate the Courant--Fischer min-max characterization of the second largest eigenvalue $\lambda$. 
Thus the number of such vertices is at most $\Delta^{2s+2} \le n^{3c}$. Let $H$ denote the graph obtained from $G$ by removing all such vertices, as well as removing an $r$-net of size $\ceil{n/(r+1)}$ in $G$ (which exists by \cref{lem:net}). The number of vertices removed is $O_\Delta(n/\log\log n)$.
We have
\[
\mult(\lambda, H)\lambda^{2s} 
\le \sum_i \lambda_i (H)^{2s} 
\le \sum_{v \in V(H)} \lambda_1 (B_H(v, s))^{2s}
\]
by \cref{lem:local-global}. 
Also, for every $v \in V(H)$, we have $\lambda_1(B_G(v, s+1)) \le \lambda$.
Recall that $H$ is obtained from $G$ by removing an $r$-net.
In $B_G(v,s+1)$, the complement of $B_H(v,s)$ is an $r$-net of $B_G(v,s+1)$, since starting with any vertex in $G$, we can walk $r$ steps to a vertex in the net in $G$, although this may end up taking us outside the radius $s$ ball around $v$.
By \cref{lem:spec-rad-decr},
\[
\lambda_1 (B_H(v, s))^{2s} \le \paren{\lambda^{2r}-1}^{s/r}.
\]
Putting the inequalities together,
\[
\mult(\lambda, H) \le n \paren{1 - \lambda^{-2r}}^{s/r} \le e^{-\sqrt{\log n}} n.
\]
To conclude the proof, we apply the Cauchy eigenvalue interlacing theorem\footnote{The Cauchy eigenvalue interlacing theorem tells us that if we remove the first row and column of a real symmetric matrix, then the resulting eigenvalues $\lambda'_1 \ge \cdots \ge \lambda'_{n-1}$ interlace with the original eigenvalues $\lambda_1 \ge \cdots \ge \lambda_n$ in the sense that $\lambda_1 \ge \lambda'_1 \ge \lambda_2 \ge \lambda'_2 \ge \cdots$. 
In particular if some eigenvalue $\lambda$ has multiplicity $k$ in the larger matrix, then it has multiplicity at least $k-1$ in the smaller matrix.
Thus, for graphs, removing $h$ vertices from $G$ cannot decrease the eigenvalue multiplicity of $\lambda$ by more than $h$.}. 
Since $H$ is obtained from $G$ by removing $O_\Delta(n/\log\log n)$ vertices, we  have
\begin{align*}
\mult(\lambda, G) 
&\le \mult(\lambda, H) + O_\Delta(n/\log\log n)
\\
&= O_\Delta(n/\log\log n).
\end{align*}
This concludes the proof of \cref{thm:mult}.

It remains an intriguing open problem to determine the optimal quantitative bounds in \cref{thm:mult}. 
This could improve the equiangular lines result by reducing $d_0(\alpha)$ in \cref{thm:equiang}(a) and improving the implicit error term in \cref{thm:equiang}(b).

\begin{open}
What is the maximum possible second eigenvalue multiplicity of a connected $n$-vertex graph with maximum degree $\Delta$?
In particular, is it $O_\Delta(n^{1-c})$ for some $c >0$?
\end{open}

The eigenvalue multiplicity bound is central to equiangular lines. It would be exciting to find additional applications.

\subsection*{Lower bound constructions}

How can we construct graphs with high second eigenvalue muliplicity? Eigenvalues tend not to collide by accident (search ``eigenvalue repulsion'' for some neat graphics). 
One method of forcing eigenvalue multiplicity is through symmetry.
Indeed, an eigenvector may be transformed into additional eigenvectors via automorphisms of the graph.

Given a Cayley graph on $n$ vertices, a basic result from group representation theory allows us to decompose $\CC^n$ into eigenspaces according to irreducible representations of the group. 
In particular, each irreducible representation $W$ of the group generates $\dim W$ eigenvalues each repeated $\dim W$ times (there can be further equalities between these eigenvalues). 
The trivial representation gives the top eigenvalue.
If a group has no small non-trivial irreducible representations (such groups are called ``quasirandom'' by Gowers~\cite{Gow08}), then every Cayley graph on this group has large second eigenvalue multiplicity. 

A standard example of a group without small non-trivial representations is $\PSL(2,q)$.
This group has order $n = \Theta(q^3)$. All its non-trivial representations have dimension at least $(q-1)/2 = \Theta(n^{1/3})$. 
This gives examples of connected bounded degree Cayley graphs with second eigenvalue multiplicity at least $\Omega(n^{1/3})$.
Together with Haiman, Schildkraut, and Zhang~\cite{HSZZ22}, we found constructions giving the following.

\begin{theorem}\label{thm:constructions}
\begin{enumerate}
	\item [(a)] There exist infinitely many connected $n$-vertex graphs with maximum degree 4 and second eigenvalue multiplicity at least $\sqrt{n/\log_2 n}$.
	\item [(b)] There exist infinitely many connected $n$-vertex 18-regular Cayley graphs with second eigenvalue multiplicity at least $n^{2/5} - 1$.
\end{enumerate}
\end{theorem}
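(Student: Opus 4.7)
The plan is to produce eigenvalue multiplicities through group symmetry, using the fact that for a Cayley graph $\mathrm{Cay}(G,S)$ with symmetric $S$ the regular representation decomposes as $\CC[G]\cong \bigoplus_\rho V_\rho^{\oplus \dim V_\rho}$, and the adjacency operator acts on the $\rho$-isotypic component as $\rho(\sum_{s\in S}s)\otimes I$. Hence every eigenvalue of $\rho(\sum_{s\in S}s)$ contributes multiplicity at least $\dim V_\rho$ to the Cayley spectrum, and the task reduces to finding a group $G$ with a large irreducible representation relative to $|G|$ together with a generating set $S$ for which a specific eigenvalue inside some such $\rho$ ends up as the second-largest eigenvalue of the whole Cayley graph.

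For part (b) I would take $G$ to be the extraspecial $p$-group of order $p^{1+4}$ as $p$ ranges over odd primes. Such a group has $p-1$ non-linear irreducible representations, each of dimension exactly $p^2$, so with $n=p^5$ every non-linear $\rho$ satisfies $\dim V_\rho = p^2 = n^{2/5}$. The abelianization $G/[G,G]\cong(\ZZ/p)^4$ has rank $4$, so $G$ is generated by four elements; padding to exactly nine generators and including inverses yields an $18$-element symmetric generating set. Every non-trivial eigenvalue of $\rho(\sum_{s\in S}s)$ then lives in the Cayley spectrum with multiplicity at least $p^2$. To force such an eigenvalue to be the second largest of $\mathrm{Cay}(G,S)$, the nine generators must be chosen so that all non-trivial linear-character sums over $(\ZZ/p)^4$ lie strictly below the top non-trivial eigenvalue of $\rho(\sum_{s\in S}s)$; this is an expander-type estimate on the abelianization that holds generically by a standard character-sum argument. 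The $-1$ in the stated bound $n^{2/5}-1$ simply absorbs the edge case that our chosen eigenvalue happens to equal the top eigenvalue $18$.

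For part (a) the maximum-degree-$4$ constraint rules out a direct Cayley construction, since the groups of interest have minimal generating sets of size larger than $2$. My plan is to start from a Cayley graph of an extraspecial $2$-group of order $2^{1+2k}$, whose unique non-linear irreducible representation has dimension $2^k\sim\sqrt{n/2}$; by the same reasoning as above this graph is $2k$-regular with second eigenvalue multiplicity at least $2^k-1$. I would then apply a gadget-replacement operation: substitute each vertex by a small bounded-degree graph with $2k$ distinguished boundary vertices (for instance a balanced binary tree on $2k$ leaves, or a $2k$-cycle) and attach the original incident edges at the boundary. This multiplies the vertex count by $O(k)=O(\log n)$, so the new graph has $n'=\Theta(n\log n)$ vertices, while bringing the maximum degree down to at most $4$. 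Each original eigenvector lifts to an eigenvector of the new graph by harmonic extension inside each gadget (solving a small linear system with prescribed boundary values), so the target eigenspace survives with only $O(1)$ loss; tuning $k$ then gives multiplicity $\Omega(\sqrt{n'/\log_2 n'})$, as required.

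The main obstacle sits in part (a), where the gadget replacement must be designed with care: the new spectrum must be analyzed to confirm that the lifted eigenvalue remains the \emph{second} largest rather than being overtaken by eigenvalues arising from the gadgets themselves. This calls for an explicit transfer-matrix or diagonalization argument on the gadget, and a judicious choice of its size (with small modifications to achieve uniform degree $4$) in order to avoid introducing spurious large eigenvalues. In part (b) the analogous difficulty is more routine, reducing to a concrete character-sum bound on $(\ZZ/p)^4$ for a well-chosen nine-element subset, which one can establish either probabilistically or via an explicit algebraic construction respecting the symplectic structure on the Frattini quotient of $G$.
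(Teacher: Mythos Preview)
Your overall strategy---produce multiplicity from a large irreducible representation and then arrange for the corresponding eigenvalue to land in second place---matches the paper. The execution for part~(a), however, diverges from the paper in a way that leaves a real gap.

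For part~(a) the paper does not start from a high-degree Cayley graph and then reduce the degree. Instead it takes the affine group $\Aff(\FF_p)\cong\FF_p^\times\ltimes\FF_p$, which is $2$-generated: one multiplicative shift by a generator of $\FF_p^\times$ and one additive shift by $1\in\FF_p$. With inverses this already gives a $4$-regular Cayley graph, so no degree-reduction gadgetry is needed at all. The $(p-1)$-dimensional irreducible representation contributes an eigenvalue of multiplicity $p-1$; the only remaining issue is making sure it sits in second place. The paper does this by subdividing every additive-shift edge into a path of length $\lceil\log p\rceil$, which keeps the maximum degree at $4$, inflates the vertex count by a $\log p$ factor (whence the $\sqrt{n/\log_2 n}$), and spectrally ``weakens'' the additive edges so that the spectrum is dominated by the multiplicative-shift subgraph---a disjoint union of $p$ cycles whose top eigenvalue has multiplicity $p$.

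Your plan for~(a) instead replaces each vertex of a $2k$-regular Cayley graph by a degree-reducing gadget. The sentence ``each original eigenvector lifts to an eigenvector of the new graph by harmonic extension inside each gadget'' is where the gap sits. Harmonic extension solves a Dirichlet problem inside the gadget with prescribed boundary data; it does not produce an adjacency eigenvector with the original eigenvalue. Even if you instead solve $(A-\mu I)g=0$ inside each gadget, at a boundary vertex the equation couples the gadget interior to the external edge, forcing a relation between the old eigenvalue $\lambda$ and the new one $\mu$ that depends on the gadget; there is no reason the multiplicity survives with only $O(1)$ loss, and the gadget will typically introduce its own eigenvalues above $\mu$. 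You flag this as an ``obstacle,'' but it is really the entire content of the construction. The paper sidesteps it completely by (i) picking a $2$-generated group with an irrep of dimension $\approx\sqrt{|G|}$, and (ii) modifying \emph{edges} via subdivision, whose spectral effect is easy to control, rather than vertices.

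For part~(b) the paper gives essentially no detail beyond saying that it uses ``a different group whose Cayley graph came with a spectral gap,'' so that the subdivision step is unnecessary. Your extraspecial $p$-group of order $p^{5}$ is in the right spirit and matches the arithmetic $p^{2}=n^{2/5}$, but you still owe a concrete choice of nine generators and a proof (not ``holds generically'') that every nontrivial linear-character sum on $(\ZZ/p)^{4}$ is strictly smaller than the top nonlinear eigenvalue. The paper's phrasing suggests the actual group was selected precisely so that this verification is explicit rather than probabilistic.
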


Here is the construction for \cref{thm:constructions}(a).

\begin{description}
	\item [Step 1]
	Take a 4-regular Cayley graph on the group $\Aff(\FF_p) \cong \FF_p^\times \ltimes \FF_p$ of affine transformations of $\FF_p$, with edges generated by
	\begin{enumerate}
		\item[(i)] a multiplicative shift by a generator of $\FF_p^\times$, and 
		\item[(ii)] an additive shift by $1 \in \FF_p$.
	\end{enumerate}
	\item [Step 2] Subdivide each type (ii) edge into a path of length $\ceil{\log p}$.
\end{description}

Let us provide some insight into why this construction works.
The irreducible representations of $\Aff(\FF_p)$ consist of the one-dimensional characters of $\FF_p^\times$ as well as a $(p-1)$-dimensional representation. The $(p-1)$-dimensional representation arises from restricting the permutation action of $\Aff(\FF_p)$ on $\FF_p$ to the orthogonal complement of the all-ones vector.

Consider the Cayley graph on $\Aff(\FF_p)$ generated only by type (i) edges. This is a disconnected graph with $p$ components. Its top eigenvalue has multiplicity $p$, which comes from the trivial representation (with multiplicity one) and the $(p-1)$-dimensional representation (with multiplicity $p-1$).

Adding type (ii) edges may cause some eigenvalues to be reordered.
Elongating edges to paths in Step 2 prevents this order shuffling. 
In a sense, spectral effects deteriorate over long paths, and subdividing type (ii) edges feels like nearly removing them.

To prove \cref{thm:constructions}(b), 
we used a different group whose Cayley graph came with a spectral gap. 
This renders the subdivision in Step 2 unnecessary.

Constructing graphs with second eigenvalue multiplicity asymptotically greater than $n^{1/2}$ using ideas based on group symmetries seems difficult. This is because all irreducible representations of a group $\Gamma$ have a dimension of at most $\sqrt{\abs{\Gamma}}$ due to $\abs{\Gamma} = \sum_{\rho \text{ irrep}} (\dim \rho)^2$.

In typical constructions, the actual value of the second eigenvalue $\lambda$ fluctuates.
What if we ask $\lambda$ to remain fixed? It was previously conjectured that $\mult(\lambda, G) = O_{\Delta, \lambda}(1)$. However, Schildkraut~\cite{Sch23} recently constructed counterexamples. Using his constructions, he showed that in \cref{thm:equiang}(b), for certain values of $\alpha$ where $k=\infty$, the estimate $N_\alpha(d) = d + O_\lambda(d/\log\log d)$ cannot be sharpened to $d + o(\log\log d)$.

The proof of \cref{thm:mult} is analytic in nature, allowing for some margin of error. Specifically, it shows that there are $O_\Delta(n/\log\log n)$ eigenvalues within the interval $[(1-1/\log n)\lambda_2,\lambda_2]$. 
In \cite{HSZZ22}, we constructed a family of connected bounded degree graphs roughly matching this bound.
This construction involves expanders and long paths. 
It highlights the potential limitations in pushing the proof of \cref{thm:mult} to provide better quantitative bounds.

\subsection*{Related upper bounds}

Among regular graphs, McKenzie, Rasmussen, and Srivastava~\cite{MRS21} improved the second eigenvalue multiplicity bound to $O(n/(\log n)^c)$. More generally, they proved their result for the normalized adjacency matrix $D_G^{-1/2} A_G D_G^{-1/2}$, where $D_G$ is the diagonal matrix of degrees. This matrix corresponds to simple random walks on the graph $G$.
They proved a nice intermediate result about random walks that is interesting in its own right. They showed that a typical closed walk of length $2k$ covers at least $k^c$ vertices. 
To derive the eigenvalue multiplicity bound, they note that removing a random set of vertices (in contrast to removing a net as we did earlier) destroys many closed walks.

An earlier result by Lee and Makarychev~\cite{LM08} tells us that non-expanding Cayley graphs must have small second eigenvalue multiplicity. More precisely, given a connected Cayley graph with doubling constant $K = \max_{r > 0} \abs{B(2r)}/\abs{B(r)}$, where $B(r)$ is the radius $r$ ball, the second eigenvalue multiplicity is $e^{O((\log K)^2)}$.
As a corollary, connected bounded degree Cayley graphs on abelian groups or on nilpotent groups of bounded step always has bounded second eigenvalue multiplicity.
Therefore Cayley graph constructions with large second eigenvalue multiplicity must come from highly nonabelian groups, consistent with examples such as $\PSL(2,q)$. 
Interestingly, Lee and Makarychev's work builds on ideas from Riemannian geometry, namely works by Colding and Minicozzi~\cite{CM97}, as well as Kleiner~\cite{Kle10}. These are all related to Gromov's theorem that groups of polynomial growth are virtually nilpotent.

\section*{Further Questions}

\subsection*{Spherical codes with restricted angles}

A natural extension of equiangular lines involves spherical codes with restricted angles. 
Given $L \subset [-1,1)$, an \emph{$L$-code} is a configuration of points on a unit sphere in $\RR^d$ whose pairwise inner products lie in $L$. Here are some notable examples.
\begin{enumerate}
	\item Equiangular lines: $L = \{-\alpha, \alpha\}$.
	By assigning a unit vector to each line direction, the inner products between these vectors are $\pm \alpha$.
	\item Spherical $t$-distance set: $\abs{L} = t$.
	\item Classical spherical codes: $L = [-1,\cos\theta]$. These are unit vectors pairwise separated by angle at least $\theta$.
	\item Kissing number problem: $L = [-1,\frac12]$. What is the maximum number of nonoverlapping unit balls that all touch a unit ball?
\end{enumerate}

\begin{problem}
Let $N_L(d)$ denote the maximum size of an $L$-code in $\RR^d$. 
Determine, for each fixed $L \subset [-1,1)$, the growth rate of $N_L(d)$ as $d \to \infty$.	
\end{problem}

This general problem is exceptionally challenging.
For example, the best known lower and upper bounds on the classical spherical code problem (i.e., $L = [-1,\cos\theta]$) are exponentially far apart.
The best known upper bounds for classical spherical codes are due to Kabatiansky and Levenshtein~\cite{KL78} from the 1970's, building on Delsarte's influential linear programming bound for error correcting codes.
These bounds also give the state-of-the-art upper bounds on sphere packing densities in high dimensions.

The work described in this article is most relevant to $L \subset [-1, -\beta] \cup \set{\alpha_1, \dots, \alpha_k}$ for some $\alpha_1, \dots, \alpha_k, \beta > 0$, in which case \cite{BDKS18} shows that $N_L(d) = O_{\beta, k}(d^k)$. 
It remains an open problem to pinpoint $N_L(d)$, analogous to equiangular lines.

There was some partial success in the case of spherical two-distance sets, specifically when $L = \set{\alpha, -\beta}$ for some $\alpha, \beta > 0$.
This case is open in general, although we now know how to solve it when $\alpha < 2\beta$ or $(1-\alpha)/(\alpha + \beta) < \lambda^*$ for some special constant $\lambda^* \approx 2.02$ that arises from spectral graph theory for signed graphs~\cite{JTYZZ23,JP21}.
Whereas equiangular lines are connected to eigenvalue multiplicity in graphs, 
there is strong evidence that spherical two-distance sets are linked to eigenvalue multiplicity in signed graphs. 
However, we do not know what is the right analog of the the sublinear second eigenvalue multiplicity for signed graphs. 
The most straightforward statement is false.
We saw earlier a connected bounded degree whose smallest eigenvalue has linear multiplicity.
By putting $-1$ on every edge, this gives a connected signed graph with linear top eigenvalue multiplicity.

\subsection*{Complex equiangular lines}

Another direction concerns complex equiangular lines. 

\begin{problem}
	For a fixed $0 < \alpha < 1$ and large $d$, what is the maximum number of unit vectors in $\CC^d$ whose pairwise Hermitian inner products all have magnitude $\alpha$?
\end{problem}

Recent work by Balla~\cite{Bal21} proved an upper bound of $O_\alpha(d)$. 
A more precise estimate remains elusive.
Furthermore, by viewing complex lines as real two-dimensional flats, we see that there are also natural extensions of the problem to configurations of higher dimensional subspaces.
There is a vast amount to explore in this space.

We finish by mentioning a beautiful conjecture called Zauner's conjecture. 
Recall from the beginning of the article that there are at most $\binom{d+1}{2}$ equiangular lines in $\RR^d$.
Essentially the same proof shows that there are at most $d^2$ equiangular lines in $\CC^d$. 
The $\RR^d$ bound is rarely tight.
On the other hand, Zauner's conjecture says that there exist $d^2$ equiangular lines in $\CC^d$ for all $d$.
This conjecture has been verified in low dimensions.
This problem has interested mathematicians and physicists alike, partly due to connections to quantum physics.
A configuration of $d^2$ equiangular lines in $\CC^d$, also known by the name SIC-POVM, would give a system of quantum measurements with ideal properties.
Recent work has connected this problem to deep ideas in algebraic number theory, namely Stark's conjectures and Hilbert's 12th problem.

\section*{Acknowledgments}
Zhao was supported by NSF CAREER Award DMS2044606 and a Sloan Research Fellowship. 
He thanks his many students and collaborators for their work on this topic.


\newcommand{\etalchar}[1]{$^{#1}$}
\providecommand{\bysame}{\leavevmode\hbox to3em{\hrulefill}\thinspace}
\providecommand{\MR}{\relax\ifhmode\unskip\space\fi MR }
\providecommand{\MRhref}[2]{%
  \href{http://www.ams.org/mathscinet-getitem?mr=#1}{#2}
}
\providecommand{\href}[2]{#2}

\end{document}